\newtheorem{thm}{Theorem}[section]
\newtheorem{lemma}[thm]{Lemma}
\newtheorem{cor}[thm]{Corollary}
\theoremstyle{definition}
\theoremstyle{remark}
\newtheorem{remark}[thm]{Remark}
\theoremstyle{rem}
\newcommand\bq{\begin{equation}}
\newcommand\eq{\end{equation}}
\newcommand\beq{\begin{eqnarray*}}
\newcommand\eeq{\end{eqnarray*}}
\newcommand\ben{\begin{enumerate}}
\newcommand\een{\end{enumerate}}
\newcommand\bit{\begin{itemize}}
\newcommand\eit{\end{itemize}}
\newcommand\des{{\rm des}}
\newcommand\desB{{\rm des_B}}
\newcommand\DesB{{\rm DES_B}}
\newcommand\desD{{\rm des_D}}
\newcommand\DesD{{\rm DES_D}}
\newcommand\Des{{\rm DES}}
\newcommand\mdB{{\rm maxdrop_B}}
\newcommand\st{{\rm st}}
\newcommand\sst{{\rm sst}}
\def\til{\widetilde}
\keywords{Eulerian polynomials, descent number, type B Coxeter group, signed permutations, type D Coxeter group, even signed permutations, interlacing roots, compatible polynomials}
\subjclass[2010]{05A05}
\begin{document}

\title[Recurrences for Eulerian poly. of type B and D]{Recurrences for Eulerian polynomials of type B and type D}
\date{January 22, 2015}
\author[M. Hyatt]{Matthew Hyatt}
\address{Department of Mathematics, Lehigh University, Bethlehem, PA 18015}
\address{Mathematics Department, Pace University, Pleasantville, NY 10570}
\email{mhyatt@pace.edu}

\begin{abstract}

We introduce new recurrences for the type B and type D Eulerian polynomials,
and interpret them combinatorially. These recurrences
are analogous to a well-known recurrence for the type A Eulerian polynomials.
We also discuss their relationship to polynomials introduced by Savage and Visontai
in connection to the real-rootedness of the corresponding Eulerian polynomials.


\end{abstract}

\maketitle

\vbox{\tableofcontents}

\section{Introduction}\label{section intro}

Let $S_n$ denote the group of permutations on the set $[n]:=\{1,2,\dots , n\}$.
Let $A_n(t)$ denote the classic (type A) Eulerian polynomial, that is
\[A_n(t)=\sum_{\pi\in S_n}t^{\des(\pi)},\]
where the descent number of $\pi\in S_n$ is defined by
\[\des(\pi)=|\{i\in [n-1]:\pi(i)>\pi(i+1)\}|.\]
These polynomials were first introduced by Euler, although he did not
define them via descents of permutations (see \cite{Foata}). The following generating function identity, which is 
attributed to Euler, is an equivalent definition of these polynomials
\[\sum_{n\geq 0}A_n(t)\frac{x^n}{n!}=\frac{t-1}{t-\exp((t-1)x)}.\]
There are numerous recurrences for the type A Eulerian polynomials.
For the purposes of this paper we call the reader's attention to
the following recurrence which holds for $n\geq 1$, and is also attributed to Euler
\begin{equation}\label{Arec}
A_n(t)=\sum_{k=0}^{n-1}A_k(t){n \choose k}(t-1)^{n-k-1}.
\end{equation}
The main goal of this paper is to find recurrences analogous to \eqref{Arec} 
for the type B and type D Eulerian polynomials,
and interpret them combinatorially.


In section \ref{sectionB} we establish notation for the group of signed permutations, denoted $B_n$. This is
a Coxeter group of type B, and thus has an analogous notion of descent. We let
\[B_n(t)=\sum_{\pi\in B_n}t^{\desB(\pi)},\]
which are called the type B Eulerian polynomials.
While there are several recurrences for these polynomials
(see for example \cite{Brenti}, \cite{Chow2}, \cite{ChoGes}, \cite{Stembridge}),
we give a new recurrence which 
we consider to be an analog of \eqref{Arec}.
\begin{thm}\label{thmB}
Let $\displaystyle P_n(t)=\sum_{k=0}^{n-1}B_k(t){n \choose k}(t-1)^{n-k-1}$. For $n\geq 1$ we have
\[B_n(t)=P_n(t)+t^nP_n(t^{-1}).\]
\end{thm}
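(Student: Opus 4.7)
The plan is to first use the palindromicity $B_k(t) = t^k B_k(t^{-1})$ of the type B Eulerian polynomials, which follows from the involution $\pi \mapsto -\pi$ on $B_k$, to symmetrize the right hand side. Combined with $(t^{-1}-1)^{n-k-1} = t^{-(n-k-1)}(1-t)^{n-k-1}$, this gives
\[
t^n P_n(t^{-1}) \;=\; t\sum_{k=0}^{n-1}\binom{n}{k}\,B_k(t)\,(1-t)^{n-k-1},
\]
and then using $(1-t)^{n-k-1} = (-1)^{n-k-1}(t-1)^{n-k-1}$ reduces the theorem to the equivalent uniform identity
\[
B_n(t) \;=\; \sum_{k=0}^{n-1}\binom{n}{k}\,B_k(t)\,(t-1)^{n-k-1}\bigl[\,1 - t(-1)^{n-k}\,\bigr].
\]

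Next I would verify this identity using exponential generating functions. Letting $B(x,t) = \sum_{n \geq 0} B_n(t)\,x^n/n!$ and multiplying the identity by $x^n/n!$ before summing over $n \geq 1$, the right side factors as $B(x,t) \cdot H(x,t)$, where
\[
H(x,t) \;=\; \sum_{m \geq 1}(t-1)^{m-1}\bigl[\,1 - t(-1)^m\,\bigr]\,\frac{x^m}{m!}.
\]
Splitting $H$ by the parity of $m$ produces $\sinh$ and $\cosh$ series in $(t-1)x$ which recombine to give
\[
H(x,t) \;=\; 1 + \frac{e^{(t-1)x} - t\,e^{(1-t)x}}{t-1}.
\]
The identity then amounts to $B(x,t)\bigl(1 - H(x,t)\bigr) = 1$, which is equivalent to the classical exponential generating function $B(x,t) = (1-t)e^{(1-t)x}/(1 - t\,e^{2(1-t)x})$ for the type B Eulerian polynomials, completing the argument.

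The main obstacle is the sign bookkeeping in the reduction step, since $(1-t)^{n-k-1}$ and $(t-1)^{n-k-1}$ differ by $(-1)^{n-k-1}$; the parity of $n-k$ determines whether the bracketed coefficient is $1-t$ or $1+t$, and it is precisely this even/odd split that generates the $\sinh$ and $\cosh$ terms needed to match the known EGF. A more appealing alternative, in line with the combinatorial emphasis of the paper, would be to partition $B_n$ directly into two subsets whose generating polynomials by $\desB$ are $P_n(t)$ and $t^n P_n(t^{-1})$ respectively, perhaps distinguished by the sign of a distinguished entry or the parity of some descent-position statistic; such a combinatorial decomposition appears to be the natural combinatorial content of the theorem and is presumably developed in a subsequent section.
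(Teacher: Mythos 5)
Your proposal is correct and is essentially the paper's first proof run in the opposite direction: both reduce the statement, via the palindromicity $B_k(t)=t^kB_k(t^{-1})$, to the uniform recurrence $B_n(t)=\sum_{k=0}^{n-1}\binom{n}{k}B_k(t)(1-t)^{n-k-1}\bigl(t+(-1)^{n-k-1}\bigr)$ and match it against the classical exponential generating function for $B_n(t)$. The combinatorial alternative you sketch at the end --- splitting $B_n$ by the sign of $\pi(n)$ --- is exactly the paper's second proof.
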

\begin{remark}
Given a polynomial $f(t)=\displaystyle\sum_{k=0}^nc_kt^k$ of degree $n$, the reverse of $f$, denoted $\til{f}$, 
is given by $\displaystyle \til{f}(t)=t^nf(t^{-1})=\sum_{k=0}^nc_{n-k}x^k.$
Since $P_n(t)$ has degree $n-1$, Theorem \ref{thmB} may be restated as
\[B_n(t)=P_n(t)+t\widetilde{P}_n(t).\] 
\end{remark}
Theorem \ref{thmB} is new in the sense that it does not explicitly appear in the literature.
However we will show how this theorem can be deduced from a well known generating function identity.
We will, in addition, provide a combinatorial proof of Theorem \ref{thmB}.

A recurrence similar to Theorem \ref{thmB} also holds for the type D case. 
We begin section \ref{sectionD} by establishing 
notation for the group of even signed permutations, denoted $D_n$. This is
a Coxeter group of type D, and thus has an analogous notion of descent. We let
\[D_n(t)=\sum_{\pi\in D_n}t^{\desD(\pi)},\]
which are called the type D Eulerian polynomials. 
Again, there are several recurrences for these polynomials
(see for example \cite{Brenti}, \cite{Chow2003}, \cite{Chow2}, \cite{Stembridge}).
Here we give a new recurrence, which is
analogous to \eqref{Arec} and Theorem \ref{thmB}.
\begin{thm}\label{thmD}
Let $\displaystyle Q_n(t)=\sum_{k=0}^{n-1}D_k(t){n \choose k}(t-1)^{n-k-1}$. For $n\geq 2$ we have
\[D_n(t)=Q_n(t)+t^nQ_n(t^{-1})=Q_n(t)+t\widetilde{Q}_n(t).\]
\end{thm}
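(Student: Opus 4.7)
My plan is to follow the same two-pronged strategy indicated for Theorem \ref{thmB}: first a quick exponential generating function (EGF) derivation, and then a combinatorial bijection paralleling the promised combinatorial proof of the type B case. For the EGF approach, the first step is to interchange the order of summation in the definition of $Q_n(t)$ to obtain
\[
\sum_{n \geq 1} Q_n(t)\,\frac{x^n}{n!} \;=\; D(x,t)\cdot \frac{e^{(t-1)x}-1}{t-1},
\]
where $D(x,t) := \sum_{n \geq 0} D_n(t)\,x^n/n!$. The key observation is that polynomial reversal $f(t) \mapsto t^n f(t^{-1})$ corresponds at the EGF level to the substitution $(x,t) \mapsto (tx,t^{-1})$, so
\[
\sum_{n \geq 0} t^n Q_n(t^{-1})\,\frac{x^n}{n!} \;=\; Q(tx, t^{-1}),
\]
writing $Q(x,t)$ for the series on the left of the first display. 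Matching the low-degree terms ($D_0 = D_1 = 1$, $Q_0 = 0$, $Q_1 = 1$), Theorem \ref{thmD} becomes equivalent to the single EGF identity
\[
D(x,t) \;=\; Q(x,t) + Q(tx,t^{-1}) + 1 - tx,
\]
which I would then verify by substituting the closed form for $Q(x,t)$ above and applying the classical EGF for the type D Eulerian polynomials.

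For a combinatorial proof, I would mimic the type B decomposition: given $\pi \in D_n$, choose a pivot (say, the position of the entry with largest absolute value), and split $\pi$ into a shorter element of $D_k$ together with insertion-and-sign data describing how the remaining $n-k$ entries are placed. The insertion data should naturally produce the factor $\binom{n}{k}(t-1)^{n-k-1}$ present in each summand of $Q_n(t)$. A sign-flipping involution on the pivot letter would then interchange the contributions of $Q_n(t)$ and $t^n Q_n(t^{-1})$, with the factor $t^n$ arising because each descent becomes a non-descent (and conversely) under the map.

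The main obstacle is the peculiar type D descent at position 1 (which compares $-\pi(1)$ with $\pi(2)$ rather than using a fixed left anchor), together with the global ``even number of negative signs'' constraint defining $D_n \subset B_n$. These features are precisely what force the identity to require $n \geq 2$: at $n=1$ one has $Q_1(t) + t\,Q_1(t^{-1}) = 1 + t \neq 1 = D_1(t)$. Ensuring that the pivot decomposition and the sign-reversing involution both respect the type D descent rule \emph{and} the parity constraint on signs is where the combinatorial argument will need the most care, while the EGF manipulation will correspondingly demand careful handling of the two low-order correction terms $1$ and $-tx$.
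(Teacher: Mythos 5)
Your generating-function route is genuinely different from the paper's: the paper explicitly states that it ``sees no easy way'' to deduce Theorem \ref{thmD} from \eqref{Dgen} and instead gives a purely combinatorial proof. Your EGF plan does in fact work. The identity $\sum_{n\ge 1}Q_n(t)x^n/n! = D(x,t)\bigl(e^{(t-1)x}-1\bigr)/(t-1)$ is a correct binomial convolution, the reversal-as-substitution observation is right, and the target identity $D(x,t)=Q(x,t)+Q(tx,t^{-1})+1-tx$ with the two correction terms accounting for $n=0,1$ is exactly equivalent to the theorem. I checked that substituting \eqref{Dgen} verifies it; the one computation you should not wave away is evaluating $Q(tx,t^{-1})=D(tx,t^{-1})\cdot t\bigl(1-e^{-(t-1)x}\bigr)/(t-1)$, for which the clean intermediate fact is $D(tx,t^{-1})=D(x,t)+(t-1)x$ (palindromicity of $D_n(t)$ for $n\ge 2$, with the $n=1$ defect producing the extra $(t-1)x$). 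Since that verification is the entire content of the proof, write it out. What this route buys is brevity; what it loses is any combinatorial meaning for the two summands $Q_n(t)$ and $t^nQ_n(t^{-1})$, which is the main point of the paper's argument: there, $Q_n(t)$ is the generating function for $\desD$ over $D_n^+=\{\pi:\pi(n)>0\}$ (proved via a descent-set bijection and the binomial transform $\sum_{S\subseteq\DesD(\pi)}t^{|S|}=(t+1)^{\desD(\pi)}$, which is where the $(t-1)^{n-k-1}$ actually comes from), and $t^nQ_n(t^{-1})$ is the corresponding sum over $D_n^-$.

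Your combinatorial sketch, by contrast, would not work as stated. Flipping the sign of a single pivot letter changes the parity of the number of negative entries, so it does not map $D_n$ to itself; and even inside $B_n$, negating one letter only disturbs descents adjacent to that position (plus possibly position $0$), so it cannot complement the full descent set and cannot produce the factor $t^n$. The paper's involution $\varphi$ negates \emph{all} letters when $n$ is even and all but the first when $n$ is odd, and even then a case analysis at positions $0$ and $1$ is needed to get $\desD(\pi)+\desD(\varphi(\pi))=n$. Likewise, a pivot at the largest absolute value does not obviously yield the summand $\binom{n}{k}D_k(t)(t-1)^{n-k-1}$. If you want the combinatorial proof, replace the pivot idea with the split on the sign of $\pi(n)$ and the two lemmas just described; otherwise, complete the EGF computation and present that alone.
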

While there is a well known generating function identity for the type D Eulerian polynomials, 
we see no easy way to deduce Theorem \ref{thmD} from this identity. Instead we provide a combinatorial
proof of Theorem \ref{thmD}.

It turns out that certain refinements of the polynomials $Q_n(t), t^nQ(t^{-1}), P_n(t)$, and $t^nP(t^{-1})$ 
were introduced by Savage and Visontai in \cite{SavVis}, where they proved Brenti's \cite{Brenti} conjecture
that the type D Eulerian polynomials have only real roots. We discuss this connection in section \ref{real}.

\section{A recurrence for the Eulerian polynomials of type B}\label{sectionB}

As mentioned above, we let $B_n$ denote the group of signed permutations. An element
$\pi\in B_n$ is a bijection on the integers $[-n,n]$ such that $\pi(-i)=-\pi(i)$, in particular
$\pi(0)=0$. We will often 
write elements in the "window" notation, i.e. $\pi=[\pi(1),\pi(2),\dots ,\pi(n)]$, and we will also
use cycle notation. $B_n$ is a type B Coxeter group with generators 
$\tau_0,\tau_1,\dots ,\tau_{n-1}$ where
$\tau_0=[-1,2,3,\dots , n]=(1,-1)$, and $\tau_j=(j, j+1)$ for $j=1,\dots ,n-1$.
Given $\pi\in B_n$ define
\[\DesB(\pi)=\{i\in [0,n-1]:\pi(i)>\pi(i+1)\},\]
and
\[\desB(\pi)=|\DesB(\pi)|.\]
This definition coincides with the notion of Coxeter descent 
(i.e. $i\in\DesB(\pi)$ if and only if the Coxeter length of $\pi\tau_i$ is less than 
the Coxeter length of $\pi$).
The type B Eulerian polynomials, $B_n(t)$, satisfy the identity \cite[Prop 7.1 (b)]{Stembridge}
\[\frac{B_n(t)}{(1-t)^{n+1}}=\sum_{k\geq 0}(2k+1)^nt^k,\]
from which one can deduce (see also \cite[Theorem 3.4 (iv) with $q=1$]{Brenti})
\begin{equation}\label{Bgen}
\sum_{n\geq 0}B_n(t)\frac{x^n}{n!}=\frac{(1-t)\exp(x(1-t))}{1-t\exp(2x(1-t))}.
\end{equation}


Next we show how Theorem \ref{thmB} can be deduced from \eqref{Bgen}.

\

\hspace{-12pt}\textbf{\textit{First Proof of Theorem \ref{thmB}.}}
\begin{proof}

From \eqref{Bgen} we have
\[\left(\sum_{n\geq 0}B_n(t)\frac{x^n}{n!}\right)
\left(\frac{\exp(-x(1-t))-t\exp(x(1-t))}{(1-t)}\right)=1,\]
\[\left(\sum_{n\geq 0}B_n(t)\frac{x^n}{n!}\right)
\left(\sum_{n\geq 0}\frac{x^n}{n!}(1-t)^{n-1}((-1)^n-t)\right)=1,\]
\[\sum_{n\geq 0}\frac{x^n}{n!}\sum_{k=0}^{n}{n \choose k}B_k(t)(1-t)^{n-k-1}((-1)^{n-k}-t)=1.\]
So for $n\geq 1$ we have
\[\sum_{k=0}^{n}{n \choose k}B_k(t)(1-t)^{n-k-1}((-1)^{n-k}-t)=0,\]
which implies
\begin{align*}
B_n(t)&=\sum_{k=0}^{n-1}{n \choose k}B_k(t)(1-t)^{n-k-1}(t+(-1)^{n-k-1}) \\
&=P_n(t)+t\sum_{k=0}^{n-1}{n \choose k}B_k(t)(1-t)^{n-k-1} \\
&=P_n(t)+t\sum_{k=0}^{n-1}{n \choose k}t^kB_k(t^{-1})t^{n-k-1}(t^{-1}-1)^{n-k-1}\\
&=P_n(t)+t^nP_n(t^{-1}).
\end{align*}
The penultimate step uses the fact that $B_k(t)$ is symmetric of degree $k$ (see \cite[Theorem 2.4]{Brenti}).


\end{proof}

Next we provide a combinatorial proof of Theorem \ref{thmB}. We begin with two lemmas, and then show 
they imply the theorem.

\begin{lemma}\label{lemB1}
Define $B_n^+=\{\pi\in B_n:\pi(n)>0\}$. Then for $n\geq 1$
\[\sum_{\pi\in B_n^+}t^{\desB(\pi)}=P_n(t)=\sum_{k=0}^{n-1}{n \choose k}B_k(t)(t-1)^{n-k-1}.\]
\end{lemma}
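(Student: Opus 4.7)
Plan. The right-hand side $P_n(t)$ has a natural combinatorial unpacking. Reading $\binom{n}{k}B_k(t)$ as a sum over pairs $(\sigma, S)$ with $\sigma\in B_k$ and $S\in\binom{[n]}{n-k}$, and expanding $(t-1)^{n-k-1}=\sum_{T\subseteq\{k+1,\ldots,n-1\}}t^{|T|}(-1)^{n-k-1-|T|}$, each datum $(\sigma,S,k,T)$ encodes a signed permutation $\pi\in B_n^+$ with an ascending positive ``tail'' $\pi(k+1)<\cdots<\pi(n)$ whose value set is $S$, and whose prefix is obtained from $\sigma$ by the order-preserving bijection $[k]\to[n]\setminus S$ (with signs preserved). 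Thus
\[
P_n(t)\;=\;\sum_{(\pi,k,T)} t^{\desB(\sigma(\pi,k))+|T|}(-1)^{n-k-1-|T|},
\]
where the sum runs over triples with $\pi\in B_n^+$, $k$ \emph{admissible} for $\pi$ (meaning $\pi(k+1)<\cdots<\pi(n)$ are all positive), and $T\subseteq\{k+1,\ldots,n-1\}$.

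The second ingredient is the standardization identity
\[
\desB(\sigma(\pi,k+1))-\desB(\sigma(\pi,k))\;=\;\mathbf{1}\bigl[\text{position }k\text{ is a descent of }\pi\bigr],
\]
which I would prove by inspecting the descent at position $k$ of the extended prefix. Let $L_+(\pi)$ denote the length of the maximal ascending positive suffix of $\pi$ and put $k_{\min}(\pi):=n-L_+(\pi)$; then admissible $k$ range over $\{k_{\min}(\pi),\ldots,n-1\}$, and for $k>k_{\min}(\pi)$ position $k$ is an ascent of $\pi$, so $\desB(\sigma(\pi,k))$ is constant on that range.

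The heart of the proof is a sign-reversing involution $\iota$ on the triples $(\pi,k,T)$ whose fixed points are in weight-preserving bijection with $B_n^+$. My candidate $\iota$ locates the smallest index $j\in\{k+1,\ldots,n-1\}$ and toggles $j\in T$ while simultaneously shifting $k\mapsto k\pm 1$: the toggle flips the sign $(-1)^{n-k-1-|T|}$, and the descent identity guarantees the $t$-weight is preserved, since when position $k$ is an ascent of $\pi$ the $t$-exponent $\desB(\sigma(\pi,k))+|T|$ is unchanged. The fixed points are the canonical configurations $(\pi,k_{\min}(\pi),T_\pi)$ for a specific $T_\pi$ depending on whether $\pi(k_{\min})<0$ (Case I) or $\pi(k_{\min})>0$ (Case II); a short verification shows each contributes $t^{\desB(\pi)}$, summing to $\sum_{\pi\in B_n^+}t^{\desB(\pi)}$.

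The main obstacle is making the involution precise enough to handle the Case I/Case II dichotomy uniformly, in particular arranging the signs $(-1)^{n-k_{\min}(\pi)-1}$ carried by each fixed point so that the Case II contribution (where one crosses a descent at $k_{\min}$ and so loses a power of $t$ from $\desB(\sigma)$) combines with its sign to agree with the Case I contribution. This parallels exactly the algebraic cancellation in the factor $((-1)^{n-k}-t)$ of the First Proof of Theorem \ref{thmB}, and once the bookkeeping is carried out the lemma follows immediately.
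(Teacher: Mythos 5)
Your unpacking of $P_n(t)$ as a signed sum over triples $(\pi,k,T)$ is correct, and so is the standardization identity for $\desB(\sigma(\pi,k+1))-\desB(\sigma(\pi,k))$. The gap is in the involution step: you assert that an involution which fixes $\pi$ and acts only on $(k,T)$ has, for each $\pi\in B_n^+$, a single fixed point of weight $t^{\desB(\pi)}$. This is impossible, because the total signed weight of all triples lying over a fixed $\pi$ is in general \emph{not} $t^{\desB(\pi)}$. Concretely, take $n=2$ and $\pi=[1,2]$, so $\desB(\pi)=0$. The admissible $k$ are $0$ and $1$; summing $t^{d_k}(t-1)^{n-k-1}$ gives $(t-1)+1=t$, not $t^0=1$. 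Dually, $\pi=[2,1]$ has $\desB=1$ but its triples sum to $1$. The identity $P_2(t)=1+3t$ only holds after summing over all four elements of $B_2^+$ (contributions $t,1,t,t$ against descent weights $1,t,t,t$); the required cancellation/matching crosses between distinct underlying permutations, so no within-$\pi$ involution can finish the argument. To repair the proof you would need either an auxiliary bijection on $B_n^+$ realigning these contributions, or a different organization of the sum.

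For comparison, the paper does not attempt this route for type B: it observes that $B_n^+=\{\pi:\mdB(\pi)\le n-1\}$ and quotes the bubble-sort recurrence of \cite[Theorem 1.6]{Hyatt2}. The closest relative of your idea that actually works is the paper's proof of the type D analogue (Lemma \ref{lemD1}): instead of expanding $(t-1)^{n-k-1}$ with alternating signs, one proves the identity with $t$ replaced by $t+1$, writing $\sum_{\pi}(t+1)^{\desD(\pi)}=\sum_{S}t^{|S|}|U(S)|$ over subsets $S$ of descent sets and then exhibiting a genuine (sign-free) bijection for $|D_n^+(S)|$ as in Lemma \ref{lemD0}. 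That substitution eliminates the signs entirely and with them the need for the problematic involution; adapting it to type B would give a correct combinatorial proof along the lines you intend.
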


\begin{proof}
Following previous work in \cite{Hyatt2}, define
\[\mdB(\pi)=\max\Big\{\max\{i-\pi(i):\pi(i)>0\},\max\{i:\pi(i)<0\}\Big\},\]
and
\[B_{n}^{(j)}(t)=\sum_{\substack{\pi\in B_n \\ \mdB(\pi)\leq j}}t^{\desB(\pi)}.\]
Although the definition of $\mdB$ is motivated by the so-called type B bubble sort, notice that
$B_n^+=\{\pi\in B_n:\mdB(\pi)\leq n-1\}$. Thus 
\[\sum_{\pi\in B_n^+}t^{\desB(\pi)}=B_{n}^{(n-1)}(t).\]
It is proved (combinatorially) that for $m\geq 0$ we have \cite[Theorem 1.6]{Hyatt2}
\[B_{m+j+1}^{(j)}(t)=\sum_{k=1}^{j+1}{j+1 \choose k}B_{m+j+1-k}^{(j)}(t)(t-1)^{k-1}.\]
Setting $m=0$ and replacing $j$ with $n-1$ we obtain
\begin{align*}
B_{n}^{(n-1)}(t)&=\sum_{k=1}^{n}{n \choose k}B_{n-k}^{(n-1)}(t)(t-1)^{k-1} \\
&=\sum_{k=1}^{n}{n \choose k}B_{n-k}(t)(t-1)^{k-1} \\
&=\sum_{k=0}^{n-1}{n \choose k}B_{k}(t)(t-1)^{n-k-1},
\end{align*}
where the penultimate step follows from the fact that $B_{n}^{(j)}(t)=B_{n}(t)$ whenever $n\leq j$.

\end{proof}

\begin{lemma}\label{lemB2}
Define $B_n^-=\{\pi\in B_n:\pi(n)<0\}$. Then for $n\geq 1$
\[\sum_{\pi\in B_n^-}t^{\desB(\pi)}=t^nP_n(t^{-1}).\]
\end{lemma}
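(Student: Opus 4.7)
The plan is to exhibit a simple involution on $B_n$ that maps $B_n^+$ bijectively onto $B_n^-$ and complements the descent number. Define $\phi : B_n \to B_n$ by $(\phi \pi)(i) := -\pi(i)$ for all $i \in [-n,n]$; equivalently, $\phi \pi$ is obtained from the window of $\pi$ by negating every entry. Since $\pi(0) = 0$ implies $\pi(n) \neq 0$, the sign of $\pi(n)$ gets flipped by $\phi$, so $\phi$ restricts to a bijection $B_n^+ \to B_n^-$.

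Next I would compute how $\phi$ interacts with $\DesB$. For any $i \in [0, n-1]$, $i \in \DesB(\phi\pi)$ iff $-\pi(i) > -\pi(i+1)$ iff $\pi(i) < \pi(i+1)$. The key observation is that $\pi(i) \neq \pi(i+1)$ for every $i \in [0, n-1]$: for $i \geq 1$ this is because $\pi$ is injective on $[n]$, and for $i = 0$ it is because $\pi(0) = 0$ and $\pi(1) \neq 0$ (since $0$ is already used). Hence exactly one of $\pi(i) > \pi(i+1)$ or $\pi(i) < \pi(i+1)$ holds, giving the complementary partition
\[\DesB(\pi) \sqcup \DesB(\phi\pi) = [0, n-1], \qquad \text{so} \qquad \desB(\phi\pi) = n - \desB(\pi).\]

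Combining this with Lemma \ref{lemB1}, I would conclude
\[\sum_{\pi \in B_n^-} t^{\desB(\pi)} \;=\; \sum_{\sigma \in B_n^+} t^{\desB(\phi\sigma)} \;=\; \sum_{\sigma \in B_n^+} t^{n - \desB(\sigma)} \;=\; t^n \sum_{\sigma \in B_n^+} (t^{-1})^{\desB(\sigma)} \;=\; t^n P_n(t^{-1}),\]
which is exactly the claim.

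This proof is essentially mechanical, and I do not anticipate a serious obstacle. The only subtlety is the verification that $i = 0$ is always either a descent of $\pi$ or a descent of $\phi\pi$; this is where the type B convention $\pi(0) = 0$ plays a crucial role, and it is the single point I would be careful to write out explicitly. Everything else follows immediately from the definition of $\phi$ and Lemma \ref{lemB1}.
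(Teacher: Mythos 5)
Your proposal is correct and matches the paper's own argument: the paper uses exactly this sign-flipping bijection $\phi:B_n^+\to B_n^-$, observes that it complements the type B descent set (treating the position $i=0$ via $\pi(0)=0$ just as you do), and concludes with the same chain of equalities invoking Lemma \ref{lemB1}. No issues.
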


\begin{proof}

Define a bijection $\phi:B_n^+\rightarrow B_n^-$ by $\phi(\pi(i))=-\pi(i)$ for $i=1,\dots ,n$.
Then $\pi(1)<0$ if and only if $\phi(\pi(1))>0$. And for $i=1,2,\dots ,n-1$ we have $\pi(i)>\pi(i+1)$ if and only if $\phi(\pi(i))<\phi(\pi(i))$.
Thus $\desB(\pi)+\desB(\phi(\pi))=n$. Then
\[\sum_{\pi\in B_n^-}t^{\desB(\pi)}=\sum_{\pi\in B_n^+}t^{\desB(\phi(\pi))}
=\sum_{\pi\in B_n^+}t^{n-\desB(\pi)}
=t^n\sum_{\pi\in B_n^+}t^{-\desB(\pi)}
=t^nP_n(t^{-1}).\]

\end{proof}

\

\hspace{-12pt}\textbf{\textit{Second Proof of Theorem \ref{thmB}.}}
\begin{proof}

Using Lemmas \ref{lemB1} and \ref{lemB2} we have
\[B_n(t)=\sum_{\pi\in B_n^+}t^{\desB(\pi)}+\sum_{\pi\in B_n^-}t^{\desB(\pi)}
=P_n(t)+t^nP_n(t^{-1}).\]

\end{proof}

\section{A recurrence for the Eulerian polynomials of type D}\label{sectionD}

Let $D_n$ denote the group of even signed permutations. We call $\pi$ an 
even signed permutation if $\pi$ is a signed permutation such that there are an even number of 
negative letters among $\pi(1),\pi(2),\dots ,\pi(n)$. $D_n$ is a Coxeter group of type D with 
generators $\tau_0,\tau_1,\dots, \tau_{n-1}$ where $\tau_0=[-2, -1, 3, 4, 5, \dots ,n]=(1,-2)$, 
and $\tau_j=(j, j+1)$ for $j=1,\dots ,n-1$. Given $\pi\in D_n$ define
\[\Des(\pi)=\{i\in[n-1]:\pi(i)\geq \pi(i+1)\},\]
\[\DesD(\pi)=\begin{cases}
\Des(\pi) & \text{if } \pi(1)+\pi(2)>0 \\
\Des(\pi)\cup\{0\} & \text{if }\pi(1)+\pi(2)<0 \end{cases},\]
\[\desD(\pi)=|\DesD(\pi)|.\]
As in the type B case, this definition is motivated by the fact that it coincides with the notion of 
type D Coxeter descent.
The type D Eulerian polynomials, $D_n(t)$, satisfy
the following generating function formula due to Brenti \cite{Brenti}
\begin{equation}\label{Dgen}
\sum_{n\geq 0}D_n(t)\frac{x^n}{n!}=\frac{(1-t)\exp(x(1-t))-xt(1-t)\exp(2x(1-t))}{1-t\exp(2x(1-t))}.
\end{equation}

Although we can obtain a recurrence from \eqref{Dgen} by an approach similar to that of the first proof of 
Theorem \ref{thmB}, doing so yields the following somewhat unpleasant formula
\[D_n(t)=\sum_{k=0}^{n-1}{n\choose k}D_k(t)\sum_{j=0}^{n-k}
\frac{(n-k)!}{j!}t^{n-j-k}(1-t)^{j-1}\left[t(n-j-k+1)^j-(n-j-k-1)^j\right].\]


Instead we will prove Theorem \ref{thmD} in a manner similar to that of the second proof of Theorem \ref{thmB}. 
The first step is to
obtain a type D analog of Lemma \ref{lemB1}, which we accomplish by adapting the methods from
\cite{Hyatt2}, which in turn are extensions of methods of Chung, Claesson, Dukes, and Graham \cite{ccdg}.

We begin by discussing standardization of permutations. Suppose we have a finite set 
$C=\{c_1,c_2,\dots,c_n\}\subset \mathbb{N}$ with $c_1<c_2<\dots<c_n$, and a 
permutation $\pi$ on $C$. The \textit{standardization} of $\pi$, denoted $\st(\pi)$, is the 
permutation in $S_n$ obtained from $\pi$ by replacing $c_i$ with $i$. For example 
$\st([4,5,2,9,7])=[2,3,1,5,4]$. Given a signed permutation $\pi$, let 
$|\pi|=[|\pi(1)|,|\pi(2)|,\dots ,|\pi(n)|].$ We call $\pi$ a signed permutation on the set $C$, 
if $\pi$ is a word over $\mathbb{Z}$ and $|\pi|$ is permutation on $C$. 
We define the \textit{signed standardization} of a signed permutation, denoted $\sst(\pi)$, by
\[\sst(\pi)(i)=\begin{cases}
\st(|\pi|)(i) & \text{ if }\pi(i)>0 \\
-\st(|\pi|)(i) & \text{ if }\pi(i)<0
\end{cases}\]
For example $\sst([-6,4,-2,9,7])=[-3,2,-1,5,4]$. If the set $C$ is fixed, then the inverse of $\sst$, 
denoted $\sst_C^{-1}$, is well-defined. For example if $C=\{2,4,6,7,9\}$ and $\pi=[-3,2,-1,5,4]$, 
then $\sst_C^{-1}(\pi)=[-6,4,-2,9,7]$. If we extend the definition of type D descent set for any word 
over $\mathbb{Z}$, then it is clear that both $\sst$ and $\sst_C^{-1}$ preserve the type D descent 
set of a word. For example $\DesD([-6,4,-2,9,7])=\{0,2,4\}=\DesD([-3,2,-1,5,4])$.

Given $S\subseteq [0,n-1]$ and $U\subseteq D_n$, define
\[U(S)=\{\pi\in U:\DesD(\pi)\supseteq S\},\]
\[r_n(S)=\max\{i:[n-i,n-1]\subseteq S\},\]
with $r_n(S)=0$ if $n-1\notin S$. For our purposes $U$ will either be $D_n^+$, which we define by
$D_n^+=\{\pi\in D_n:\pi(n)>0\}$, or $U$ will be all of $D_n$. Let $\displaystyle {[n]\choose j}$ denote
the set of $j$-element subsets of $[n]$.
Given $\pi\in D_n^+(S)$, define a map $\psi$ by
\[\psi(\pi)=(\sigma,X),\]
where
\[\sigma=\sst([\pi(1),\pi(2),\dots ,\pi(n-i-1)]),\]
and
\[X=\{\pi(n-i),\pi(n-i+1),\dots ,\pi(n)\},\]
where $i=r_n(S)$.

For example consider $S=\{1,6,7\}$ and $\pi=[2, -3, 5, 1, -8, 7, 6, 4]\in D_8^+(S)$. Then $r_8(S)=2$
and $\psi(\pi)=(\sigma,X)$ where $\sigma=[2,-3,4,1,-5]$ and $X=\{4,6,7\}$.

\begin{lemma}\label{lemD0}
The map $\psi:D_n^+(S)\rightarrow D_{n-i-1}(S\cap[0,n-i-2])\times{[n]\choose i+1}$ is a bijection,
where $i=r_n(S)$. Consequently $|D_n^+(S)|=|D_{n-i-1}(S\cap[0,n-i-2])|{n\choose i+1}$.
\end{lemma}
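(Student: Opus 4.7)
The plan is to exhibit an explicit inverse $\psi^{-1}$ and check that both maps are well-defined, mutually inverse, and preserve the relevant descent data; the cardinality statement then follows by taking cardinalities.

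First I would verify that $\psi$ is well-defined. Given $\pi \in D_n^+(S)$, the containment $[n-i, n-1] \subseteq S \subseteq \DesD(\pi)$ forces the chain $\pi(n-i) \geq \pi(n-i+1) \geq \cdots \geq \pi(n)$; since $\pi(n) > 0$ and the entries of $\pi$ have distinct absolute values, these inequalities are strict and every one of $\pi(n-i), \ldots, \pi(n)$ is positive. Hence $X \in \binom{[n]}{i+1}$. Deleting this all-positive suffix leaves the parity of negative letters unchanged, so $\sigma = \sst([\pi(1), \ldots, \pi(n-i-1)])$ has an even number of negatives and thus lies in $D_{n-i-1}$. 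By the descent-preservation property of $\sst$ recalled in the excerpt, $\DesD(\sigma) = \DesD(\pi) \cap [0, n-i-2] \supseteq S \cap [0, n-i-2]$, so $\sigma \in D_{n-i-1}(S \cap [0, n-i-2])$.

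Second, I would define $\psi^{-1}(\sigma, X)$ by placing $\sst_{[n] \setminus X}^{-1}(\sigma)$ in positions $1, \ldots, n-i-1$ and the elements of $X$ in positions $n-i, \ldots, n$ arranged in decreasing order. The last $i+1$ entries are positive, so $\pi(n) > 0$ and the total count of negative letters equals that of $\sigma$, which is even, so $\pi \in D_n^+$. The decreasing suffix supplies descents at positions $n-i, \ldots, n-1$, while $\sst_{[n] \setminus X}^{-1}$ transports the descents of $\sigma$ at positions in $S \cap [0, n-i-2]$ onto $\pi$, giving $\DesD(\pi) \supseteq S$. That $\psi$ and this construction are mutual inverses is immediate once one checks their actions on the prefix and on the suffix separately.

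The one delicate bookkeeping point is the type D descent at position $0$, since this is governed by $\sgn(\pi(1) + \pi(2))$ rather than a direct comparison of consecutive entries. The key observation is that this sign depends only on the signs of $\pi(1), \pi(2)$ and on which of them has the larger absolute value, and both of those features are preserved by $\sst$ and $\sst_C^{-1}$. This is precisely what the excerpt's remark that $\sst$ and $\sst_C^{-1}$ preserve the full type D descent set packages for us. With that handled, $\psi$ is a bijection, and taking cardinalities yields $|D_n^+(S)| = |D_{n-i-1}(S \cap [0, n-i-2])| \binom{n}{i+1}$. I expect the main obstacle to be no more than careful bookkeeping at position $0$ and making sure the parity of negative letters is correctly tracked between $\sigma$ and $\pi$, both of which are already smoothed over by the properties of $\sst$ stated earlier.
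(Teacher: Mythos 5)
Your proposal is correct and follows essentially the same route as the paper: exhibit the explicit inverse $\psi^{-1}(\sigma,X)=\sst_{[n]\setminus X}^{-1}(\sigma)*(x_{i+1},\dots,x_1)$, check well-definedness in both directions using the fact that $\sst$ and $\sst_C^{-1}$ preserve the type D descent set, and take cardinalities. The extra care you take with the parity of negative letters and the position-$0$ descent is detail the paper leaves implicit, but it is the same argument.
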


\begin{proof}

Given $\pi\in D_n^+(S)$, let $\psi(\pi)=(\sigma, X)$ as above. Since signed standardization preserves type D
descent set, $\sigma\in D_{n-i-1}(S\cap[0,n-i-2])$. Since $[n-i,n-1]\subseteq \DesD(\pi)$ and $\pi(n)>0$, we have
$X\in {[n]\choose i+1}$. Thus $\psi$ is well-defined.

Next we describe the inverse map. Let $\sigma\in D_{n-i-1}(T)$ where $T\subseteq [0,n-i-2]$, and let 
$X=\{x_1,x_2,\dots ,x_{i+1}\}\in {[n]\choose i+1}$ where $x_1<x_2< \dots <x_{i+1}$. Then set
\[\psi^{-1}(\sigma,X)=\sst_{[n]\setminus X}^{-1}(\sigma)*(x_{i+1},x_i,\dots ,x_1)
\in D_n^+(T\cup [n-i,n-1]),\]
where * denotes concatenation.
For example consider $\sigma=[2,-3,4,1,-5]\in D_5(\{1\})$ and $X=\{4,6,7\}$. 
Then $[8]\setminus X=\{1,2,3,5,8\}$ and 
$\sst^{-1}_{[8]\setminus X}(\sigma)=[2,-3,5,1,-8]$, thus
$\psi^{-1}(\sigma,X)=[2,-3,5,1,-8,7,6,4]\in D_8(\{1,6,7\})$.

Clearly $\psi^{-1}(\psi(\pi))=\pi$. Since $T\subseteq [0,n-i-2]$, we have $r_n(T\cup [n-i,n-1])=i$. From this 
it follows that $\psi(\psi^{-1}(\sigma,X))=(\sigma,X)$.

\end{proof}

\begin{lemma}\label{lemD1}
Let $D_n^+=\{\pi\in D_n:\pi(n)>0\}$. Then for $n\geq 1$
\[\sum_{\pi\in D_n^+}t^{\desD(\pi)}=Q_n(t)=\sum_{k=0}^{n-1}{n \choose k}D_k(t)(t-1)^{n-k-1}.\]
\end{lemma}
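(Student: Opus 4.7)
The plan is to combine Lemma \ref{lemD0} with a standard inclusion-exclusion argument on descent sets, in the spirit of the derivation one can give for the type A recurrence \eqref{Arec}. The first step is to use the binomial identity $t^k = \sum_{j=0}^k \binom{k}{j}(t-1)^j$, which gives
\[t^{\desD(\pi)} = \sum_{S \subseteq \DesD(\pi)} (t-1)^{|S|}.\]
Swapping the order of summation,
\[\sum_{\pi \in D_n^+} t^{\desD(\pi)} = \sum_{S \subseteq [0,n-1]} (t-1)^{|S|} |D_n^+(S)|.\]

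Next I would plug in Lemma \ref{lemD0}, which for each $S$ yields $|D_n^+(S)| = |D_{n-i-1}(S\cap[0,n-i-2])|\binom{n}{i+1}$ with $i = r_n(S)$. To exploit this, I would reindex the outer sum by the pair $(i,T)$ where $i \in \{0,1,\ldots,n-1\}$ and $T \subseteq [0,n-i-2]$. Given such a pair, $S = T \cup [n-i,n-1]$ automatically has $r_n(S) = i$ (since $n-i-1 \notin T$), so the reindexing is a bijection onto all $S \subseteq [0,n-1]$ with $r_n(S) < n$; the remaining case $r_n(S) = n$, i.e.\ $S = [0,n-1]$, is vacuous because no $\pi \in D_n^+$ has $\DesD(\pi) = [0,n-1]$ (such a $\pi$ would need $0 > \pi(1) > \cdots > \pi(n) > 0$). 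Since the union is disjoint, $|S| = i + |T|$, and the sum factors as
\[\sum_{i=0}^{n-1} \binom{n}{i+1}(t-1)^i \sum_{T \subseteq [0,n-i-2]} (t-1)^{|T|} |D_{n-i-1}(T)|.\]

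Applying the same inclusion-exclusion identity, but now to all of $D_{n-i-1}$, identifies the inner sum as $D_{n-i-1}(t)$. This leaves $\sum_{i=0}^{n-1}\binom{n}{i+1}(t-1)^i D_{n-i-1}(t)$, and substituting $k = n-i-1$ rewrites this as $\sum_{k=0}^{n-1}\binom{n}{k}D_k(t)(t-1)^{n-k-1} = Q_n(t)$, as desired.

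I do not expect a serious obstacle; Lemma \ref{lemD0} already carries the combinatorial weight of the argument. The main care required is the bookkeeping in the reindexing step: one must verify both that the pair $(i,T)$ hits each $S$ exactly once with $r_n(S) = i$ (not merely $r_n(S) \geq i$), and that the boundary case $i = n$ contributes zero.
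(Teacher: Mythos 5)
Your proposal is correct and follows essentially the same route as the paper: expand $t^{\desD(\pi)}$ over subsets of the descent set, apply Lemma \ref{lemD0}, reindex by $(i,T)$ with $S=T\cup[n-i,n-1]$, and recognize the inner sum as $D_{n-i-1}(t)$ (the paper merely works with $t+1$ and substitutes $t\mapsto t-1$ at the end, a cosmetic difference). Your explicit handling of the vacuous case $S=[0,n-1]$ is a small point the paper passes over silently.
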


\begin{proof}

Given $U\subseteq D_n$ we have
\begin{align}
\sum_{\pi\in U}(t+1)^{\desD(\pi)} & =\sum_{\pi\in U}\sum_{j=0}^{\desD(\pi)}{\desD(\pi)\choose j}t^j \notag\\
&=\sum_{\pi\in U}\sum_{S\subseteq \DesD(\pi)}t^{|S|} \notag\\
&=\sum_{S\subseteq [0,n-1]}t^{|S|}\sum_{\pi\in U(S)}1\notag\\
&=\sum_{S\subseteq [0,n-1]}t^{|S|}|U(S)| \label{Ueq}.
\end{align}
Replacing $U$ with $D_n^+$ and using Lemma \ref{lemD0} we have
\begin{align*}
\sum_{\pi\in D_n^+}(t+1)^{\desD(\pi)}&
=\sum_{S\subseteq [0,n-1]}t^{|S|}|D_n^+(S)| \\
&=\sum_{S\subseteq [0,n-1]}t^{|S|}|D_{n-r_n(S)-1}(S\cap [0,n-r_n(S)-2])|{n \choose r_n(S)+1} \\
&=\sum_{i=0}^{n-1}\sum_{\substack{S\subseteq [0,n-1] \\ r_n(S)=i}}
t^{|S|}|D_{n-i-1}(S\cap [0,n-i-2])|{n \choose i+1} \\
&=\sum_{i=0}^{n-1}{n \choose i+1}t^i\sum_{\substack{S\subseteq [0,n-1] \\ r_n(S)=i}}
t^{|S|-i}|D_{n-i-1}(S\cap [0,n-i-2])|.
\end{align*}
Recall that if $r_n(S)=i$, then $S\supseteq [n-i,n-1]$ and $n-i-1\notin S$. Therefore each such $S$ can be expressed as $S=T\cup [n-i,n-1]$ for some $T\subseteq [0,n-i-2]$. Thus 
\begin{align*}
\sum_{\pi\in D_n^+}(t+1)^{\desD(\pi)}&
=\sum_{i=0}^{n-1}{n \choose i+1}t^i\sum_{T\subseteq [0,n-i-2]}t^{|T|}|D_{n-i-1}(T)|\\
&=\sum_{i=0}^{n-1}{n \choose i+1}t^i \sum_{\pi\in D_{n-i-1}}(t+1)^{\desD(\pi)}\\
&=\sum_{i=0}^{n-1}{n \choose i+1}t^i D_{n-i-1}(t+1)\\
&=\sum_{k=0}^{n-1}{n \choose k}t^{n-1-k} D_{k}(t+1),
\end{align*}
where the second equality follows from \eqref{Ueq}, and the last equality is obtained by setting $i=n-1-k$. 
Finally, the lemma follows by replacing $t$ with $t-1$.

\end{proof}

Next we provide the analog of Lemma \ref{lemB2}.

\begin{lemma}\label{lemD2}
Define $D_n^-=\{\pi\in D_n:\pi(n)<0\}$. Then for $n\geq 2$
\[\sum_{\pi\in D_n^-}t^{\desD(\pi)}=t^nQ_n(t^{-1}).\]
\end{lemma}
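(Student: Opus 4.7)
The plan is to imitate the proof of Lemma \ref{lemB2} by constructing an explicit bijection $\phi \colon D_n^+ \to D_n^-$ with the property that $\desD(\phi(\pi)) = n - \desD(\pi)$. The naive sign-flip $\pi \mapsto -\pi$ used in the type B argument cannot be used here directly, because it preserves the parity of the number of negative entries (and hence maps $D_n$ to itself) only when $n$ is even. So the construction will split into two cases according to the parity of $n$.

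Concretely, for $n$ even I would define $\phi(\pi) = [-\pi(1), -\pi(2), \dots, -\pi(n)]$, and for $n$ odd I would define $\phi(\pi) = [\pi(1), -\pi(2), \dots, -\pi(n)]$. (Uniformly, $\phi(\pi) = \pi \cdot w_0$ where $w_0$ is the longest element of $D_n$.) In either case the last entry is $-\pi(n) < 0$, and a short parity count, using that $\pi \in D_n^+$ has an even number of negatives, shows that $\phi(\pi)$ has an even number of negatives as well, so $\phi(\pi) \in D_n^-$. The map is an involution that swaps $D_n^+$ and $D_n^-$, hence a bijection.

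The substantive step is to verify $\desD(\phi(\pi)) = n - \desD(\pi)$. For $n$ even, negating every entry flips every ordinary ascent/descent (so $|\Des(\phi(\pi))| = n-1-|\Des(\pi)|$) and reverses the sign of $\pi(1) + \pi(2)$ (so it swaps whether $0$ is a type D descent); the two contributions together give exactly $n - \desD(\pi)$. For $n$ odd one must track two special positions at once. Writing $a = [0 \in \DesD(\pi)]$, $b = [1 \in \Des(\pi)]$, and $c = |\Des(\pi) \cap [2,n-1]|$, so that $\desD(\pi) = a + b + c$, I would check that the ordinary descents of $\phi(\pi)$ at positions $2, \dots, n-1$ number $n-2-c$; that position $1$ is a descent of $\phi(\pi)$ iff $\pi(1) + \pi(2) > 0$, contributing $1-a$; and that $0 \in \DesD(\phi(\pi))$ iff $\pi(1) < \pi(2)$, contributing $1-b$. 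These add to $n - \desD(\pi)$.

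Given the bijection, the lemma is immediate: by Lemma \ref{lemD1},
\[\sum_{\pi \in D_n^-} t^{\desD(\pi)} \;=\; \sum_{\pi \in D_n^+} t^{n - \desD(\pi)} \;=\; t^n Q_n(t^{-1}).\]
The main obstacle I anticipate is the odd-$n$ descent bookkeeping: because the first entry of $\phi(\pi)$ is not simply the negation of $\pi(1)$, the descent at position $1$ of $\phi(\pi)$ and the type D zero-descent of $\phi(\pi)$ each encode a different feature of $\pi$, and they must be balanced carefully against $\DesD(\pi)$ in order for the count to come out to $n - \desD(\pi)$ on the nose.
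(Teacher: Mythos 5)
Your proposal is correct and follows essentially the same route as the paper: the same parity-dependent sign-flip bijection $\varphi$ (negate all entries for $n$ even, all but the first for $n$ odd), the same observation that positions $2,\dots,n-1$ flip, and the same balancing of the contributions from positions $0$ and $1$ in the odd case, which the paper carries out via an eight-row case table where you use indicator bookkeeping. The two verifications are equivalent, and the conclusion via Lemma \ref{lemD1} is identical.
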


\begin{proof}

We want a bijection $\varphi:D_n^+\rightarrow D_n^-$. Note that we cannot flip all the signs as we did in the 
type B case, since we must ensure that $\varphi(\pi)$ has an even number of negative letters. So we define
\[\varphi(\pi)=\begin{cases}
[-\pi(1),-\pi(2),\dots ,-\pi(n)] & \text{if }n\text{ is even} \\
[\pi(1),-\pi(2),-\pi(3),\dots ,-\pi(n)] & \text{if }n\text{ is odd}\end{cases}\]

We claim that $\desD(\pi)+\desD(\varphi(\pi))=n$. The claim follows immediately if $n$ is even, 
just as in the type B case. So assume $n$ is odd,
in which case it is not true in general that $\DesD(\varphi(\pi))=[0,n-1]\setminus \DesD(\pi)$. However we still have $\pi(i)>\pi(i+1)$ if and only if $\varphi(\pi(i))<\varphi(\pi(i))$ for $i=2,3,\dots ,n-1$.
And the following table gives a case by case analysis to show that 
$|\DesD(\pi)\cap [0,1]|+|\DesD(\varphi(\pi))\cap [0,1]|=2$. Let $0<i<j$.

\

\[\begin{tabular}{c|c|c|c}
$\pi(1),\pi(2)$ & $\varphi(\pi(1)),\varphi(\pi(2))$ & $|\DesD(\pi)\cap [0,1]|$ & $|\DesD(\varphi\pi)\cap [0,1]|$ \\
\hline \hline
$i,j$ & $i,-j$ & 0 & 2 \\
\hline
$i,-j$ & $i,j$ & 2 & 0 \\
\hline
$-i,j$ & $-i,-j$ & 0 & 2 \\
\hline
$-i,-j$ & $-i,j$ & 2 & 0 \\
\hline
$j,i$ & $j,-i$ & 1 & 1 \\
\hline
$j,-i$ & $j,i$ & 1 & 1 \\
\hline
$-j,i$ & $-j,-i$ & 1 & 1 \\
\hline
$-j,-i$ & $-j,i$ & 1 & 1 \\
\hline
\end{tabular}\]

\

This verifies the claim. Therefore we have
\[\sum_{\pi\in D_n^-}t^{\desD(\pi)}=\sum_{\pi\in D_n^+}t^{\desD(\varphi(\pi))}
=\sum_{\pi\in D_n^+}t^{n-\desD(\pi)}
=t^n\sum_{k=0}^{n-1}t^{-\desD(\pi)}
=t^nQ_n(t^{-1}).\]

\end{proof}

\

\hspace{-12pt}\textbf{\textit{Proof of Theorem \ref{thmD}.}}
\begin{proof}

Using Lemmas \ref{lemD1} and \ref{lemD2} we have (a copy of the proof of Theorem \ref{thmB} replacing
B with D and $P$ with $Q$),
\[D_n(t)=\sum_{\pi\in D_n^+}t^{\desD(\pi)}+\sum_{\pi\in D_n^-}t^{\desD(\pi)}
=Q_n(t)+t^nQ_n(t^{-1}).\]

\end{proof}

\section{Real-rootedness of Eulerian polynomials}\label{real}

In examining the polynomials $P_n(t)$ and $Q_n(t)$ for small values of $n$, we observed
that $P_n(t)$ and $t^nP_n(t^{-1})$ have interlacing roots, and that 
$Q_n(t)$ and $t^nQ_n(t^{-1})$ also have interlacing roots. In this section we show that
this holds for general values of $n$, by explaining their connection to polynomials
studied by Savage and Visontai \cite{SavVis}.

We begin with some background on the real-rootedness of Eulerian polynomials.
A result first proved by Frobenius \cite{Frob} is that the type A Eulerian polynomials have only real roots.
Brenti \cite{Brenti} proved that the Eulerian polynomials of type B and of the exceptional finite
Coxeter groups have only real roots. Brenti also conjectured that for every finite Coxeter group, 
the corresponding Eulerian polynomials have only real roots. 
This conjecture was settled by Savage and Visontai \cite{SavVis} 
who showed that
the type D Eulerian polynomials do have only real roots. 
A $q$-analog of this result was proved by Yang and Zhang \cite{yz}. 
The proof in \cite{SavVis} includes an extension 
of techniques involving compatible polynomials. The notion of compatible polynomials
was introduced by Chudnovsky and Seymour \cite{ChuSey}, and is related to the idea of interlacing roots,
which we define next.

Let $f$ be a polynomial with real roots $\alpha_1\geq \alpha_2\geq \dots\geq \alpha_{\deg f}$, and let
$g$ be a polynomial with real roots $\beta_1\geq \beta_2\geq \dots\geq \beta_{\deg g}$. We say that
$f$ \textit{interlaces} $g$ if 
\[\dots\leq \alpha_2\leq\beta_2\leq\alpha_1\leq\beta_1.\]
Note that in this case we must have $\deg f\leq\deg g\leq 1+\deg f$. If $f$ interlaces $g$ or if 
$g$ interlaces $f$, then we also say that $f$ and $g$ have \textit{interlacing roots}.
The following remarkable theorem is due to Obreschkoff.
\begin{thm}[\cite{Obreschkoff}]\label{interlace}
Let $f,g\in\mathbb{R}[t]$ with $\deg f\leq\deg g\leq 1+\deg f$.
Then $f$ interlaces $g$ if and only if
$c_1f+c_2g$ has only real roots for all $c_1,c_2\in \mathbb{R}$.
\end{thm}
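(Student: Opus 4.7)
The plan is to prove the two implications separately, treating the forward direction by sign analysis at the roots of $f$ and the reverse direction by a continuity argument together with a strategic choice of $(c_1,c_2)$.

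For the forward direction (interlacing $\Longrightarrow$ real-rootedness of every $c_1 f + c_2 g$), I would first reduce to the case in which all roots are simple and the interlacing is strict, by dividing out common linear factors and invoking a perturbation argument (the set of real-rooted polynomials of bounded degree is closed). The degenerate cases $c_1 = 0$ or $c_2 = 0$ are immediate. Set $h = c_1 f + c_2 g$ and evaluate $h(\alpha_i) = c_2 g(\alpha_i)$ at the roots $\alpha_{\deg f} < \dots < \alpha_1$ of $f$. Strict interlacing forces the values $g(\alpha_1), g(\alpha_2), \dots$ to alternate in sign, so the intermediate value theorem yields one real root of $h$ in each open interval $(\alpha_{i+1}, \alpha_i)$, giving $\deg f - 1$ such roots. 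The remaining one or two roots (depending on whether $\deg g$ equals $\deg f$ or $\deg f + 1$) are extracted from the unbounded intervals $(-\infty, \alpha_{\deg f})$ and $(\alpha_1, \infty)$ by comparing the sign of $h$ at the endpoints against the sign of its leading coefficient, using $\beta_1 \geq \alpha_1$ (and $\beta_{\deg g} \leq \alpha_{\deg f}$ when $\deg g = \deg f + 1$). This accounts for all $\deg h$ roots.

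For the reverse direction, specializing $(c_1,c_2)$ to $(1,0)$ and $(0,1)$ already gives that $f$ and $g$ are individually real-rooted. To establish interlacing I would argue by contradiction: if the pattern fails somewhere, then either some open interval between two consecutive roots of $g$ contains zero roots of $f$ (or two of them), or the ordering is violated at the extremes. Consider the one-parameter family $h_\lambda = f + \lambda g$. Its roots depend continuously on $\lambda$ because the coefficients do. For $|\lambda|$ very large, $h_\lambda/\lambda$ is a small perturbation of $g$, so its roots lie close to those of $g$; at $\lambda = 0$ the roots are those of $f$. A root count in the offending interval, together with a sign analysis at its endpoints, shows that as $\lambda$ sweeps between these extremes two real roots must coalesce and then leave the real line, contradicting the hypothesis that every $h_\lambda$ is real-rooted.

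The main obstacle is making the continuity-of-roots argument precise in the reverse direction. Rigor demands either an explicit appeal to the fact that the roots of a monic polynomial vary continuously with its coefficients (together with the observation that complex roots come in conjugate pairs, so a pair can only enter or leave the real line via coalescence of two real roots), or a more algebraic alternative such as the Hermite--Biehler correspondence between interlacing pairs $(f,g)$ and polynomials $f + ig$ whose zeros all lie in a single open half-plane. Tracking the edge cases when $\deg g = \deg f$ versus $\deg f + 1$, and when roots coincide, is routine but easy to mishandle in the sign bookkeeping; I would dispose of these uniformly via the perturbation reduction mentioned in the forward direction.
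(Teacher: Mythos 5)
The paper does not prove this statement at all: it is quoted verbatim from Obreschkoff (this is the classical Hermite--Kakeya--Obreschkoff theorem), so there is no in-paper argument to compare yours against; I can only assess your sketch on its own terms. Your forward direction is the standard proof and is essentially sound: reduce to a strictly interlacing pair with simple roots (dividing out common factors and using the fact that a coefficientwise limit of real-rooted polynomials of the right degree is real-rooted), then count sign changes of $h=c_1f+c_2g$ at the roots of $f$ via the intermediate value theorem and recover the last one or two roots from the unbounded intervals. Modulo the routine bookkeeping you acknowledge, that half is complete.

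The reverse direction is where the genuine gap sits, and it is larger than your closing paragraph suggests. The step ``two real roots must coalesce and then leave the real line'' is the entire content of that implication, and as written it is asserted rather than proved. To make it work you need at least the following, none of which appear in the sketch: (i) a reduction to $f,g$ coprime, after which $h_\lambda=f+\lambda g$ satisfies $h_\lambda(\beta_i)=f(\beta_i)\neq 0$ for $\lambda\neq 0$, which is what actually prevents roots from escaping the interval $(\beta_{i+1},\beta_i)$ through its endpoints along the real line; (ii) a determination of the number of roots of $h_\lambda$ in that interval in the two limits $\lambda\to 0$ and $\lambda\to\infty$, including the question of which side of $\beta_i$ the roots approach from in the second limit; and (iii) a treatment of the degree drop when $\deg f=\deg g$ and the leading coefficients cancel, since then a root escapes to infinity rather than into the complex plane and the naive count breaks. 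The classical ways to avoid all of this are to show directly that no nonzero combination of a coprime pair with real-rooted span can have a multiple root, or to argue via monotonicity of $f/g$ between consecutive poles, or to invoke Hermite--Biehler as you mention --- but invoking Hermite--Biehler is essentially invoking an equivalent theorem, not proving this one. Separately, note that the conclusion as stated (``$f$ interlaces $g$'') is not what your argument, or any argument, can deliver when $\deg f=\deg g$: for $f=t-1$, $g=t$ every real combination is real-rooted, yet $f$ does not interlace $g$ in the paper's sense ($g$ interlaces $f$). The correct conclusion of the reverse implication is that one of the two polynomials interlaces the other, and your write-up should say so explicitly.
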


In light of Theorem \ref{interlace}, one may deduce from the recurrence (see \cite{Foata})
\[A_n(t)=(1+(n-1)t)A_{n-1}(t)+t(1-t)A_{n-1}'(t)\]
that the type A Eulerian polynomials have only real roots. And one may deduce from
the recurrence \cite{Brenti}
\[B_n(t)=((2n-1)t+1)B_{n-1}(t)+2t(1-t)B_{n-1}'(t)\]
that the type B Eulerian polynomials have only real roots.
Chow \cite[Theorem 5.3]{Chow2003} gave a recurrence for the type D Eulerian polynomials
involving the polynomials themselves and their derivatives. However the recurrence is quite complicated, and it does
not appear that the real-rootedness of the type D Eulerian polynomials can be easily deduced from this
recurrence. 

We turn our attention now to compatible polynomials. Call a set of polynomials 
$f_1,\dots ,f_k\in \mathbb{R}[t]$, \textit{compatible} if for all nonnegative numbers
$c_1,\dots ,c_k$ the polynomial $\sum_{i=1}^kc_if_i(t)$ has only real roots. Call a set of polynomials 
$f_1,\dots ,f_k\in \mathbb{R}[t]$, \textit{pairwise compatible} if for all $i,j\in [k]$ the polynomials
$f_i$ and $f_j$ are compatible. For polynomials with positive leading coefficients, Chudnovsky and Seymour 
showed that these two notions are equivalent.

\begin{thm}[{\cite[2.2]{ChuSey}}]\label{compat1}
Let $f_1,\dots ,f_k\in\mathbb{R}[t]$ be a set of pairwise compatible polynomials with positive leading coefficients.
Then $f_1,\dots ,f_k$ are compatible.
\end{thm}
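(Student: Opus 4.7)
The plan is to proceed by induction on $k$, with the base case $k=2$ being exactly the definition of pairwise compatibility. For the inductive step, suppose the result holds for sets of fewer than $k$ polynomials, and let $f_1, \dots, f_k$ with positive leading coefficients be pairwise compatible. We must show that for any nonnegative $c_1, \dots, c_k$ the polynomial $h = \sum_{i=1}^{k} c_i f_i$ has only real roots.

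The approach I would take is to combine the inductive hypothesis with the interlacing characterization from Theorem \ref{interlace}. The key observation is that for two polynomials $f,g$ with positive leading coefficients and degrees differing by at most one, pairwise compatibility is equivalent to one interlacing the other; hence each pair $f_i, f_j$ in our family interlaces in this sense. The goal then becomes to produce, out of this pairwise interlacing data, a single real-rooted polynomial $p$ that interlaces every $f_i$ simultaneously -- a \emph{common interlacer} of the family. Once a common interlacer is in hand, a standard convexity argument using Theorem \ref{interlace} (applied slot by slot to the roots, together with the positivity of the $c_i$'s) propagates interlacing from each individual $f_i$ to the nonnegative combination $h$, which in particular forces $h$ to be real-rooted.

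The main obstacle is the construction of the common interlacer, since interlacing is not transitive and the naive iterative approach -- take $p_2$ interlacing $f_1,f_2$, then $p_3$ interlacing $p_2, f_3$, and so on -- is not guaranteed to succeed. The correct tool is a Helly-type argument on the real line: for each intended root slot of $p$, pairwise compatibility of the family $\{f_i\}$ constrains the admissible location to a closed interval of $\mathbb{R}$, and pairwise nonempty intersections among these intervals yield a common point by Helly's theorem in dimension one. Executing this uniformly across root slots while preserving the degree and leading-coefficient conditions -- and correctly handling equal-degree versus off-by-one cases, repeated roots, and the boundary behavior at $\pm\infty$ -- is the delicate combinatorial-geometric step that I expect to absorb essentially all the real work of the proof. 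Once the common interlacer has been produced with these properties, the propagation step and the closing of the induction are comparatively routine consequences of Theorem \ref{interlace}.
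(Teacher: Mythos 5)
First, a point of comparison: the paper does not prove this statement at all --- it is quoted, with citation, as Theorem 2.2 of Chudnovsky and Seymour, so there is no internal proof to measure yours against. Judged against the original argument, your outline does identify the correct architecture: reduce to the existence of a \emph{common interlacer} for the whole family, obtain it from the pairwise data by a one-dimensional Helly (interval-intersection) argument, and then propagate to show that a common interlacer forces every nonnegative combination $\sum_i c_if_i$ to be real-rooted.

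There is, however, a concrete error at the point where you set up that reduction. You assert that for two real-rooted polynomials with positive leading coefficients and degrees within one of each other, compatibility is equivalent to one of them interlacing the other. This is false: take $f(t)=t(t+3)$ and $g(t)=(t+1)(t+2)$. Then $c_1f+c_2g=(c_1+c_2)t^2+3(c_1+c_2)t+2c_2$ has discriminant $(c_1+c_2)(9c_1+c_2)\geq 0$, so $f$ and $g$ are compatible, yet the roots $\{-1,-2\}$ of $g$ lie strictly between the roots $\{0,-3\}$ of $f$, so neither polynomial interlaces the other. (Compare Lemma \ref{compat3}, which needs the \emph{additional} hypothesis that $tf$ and $g$ be compatible before concluding interlacing; here $tf$ and $g$ are not compatible.) The correct pairwise statement --- and the one the Helly step actually consumes --- is that $f_i$ and $f_j$ are compatible if and only if they admit a common interlacer, which for same-degree polynomials with ordered roots $\alpha_{i,1}\geq\alpha_{i,2}\geq\cdots$ amounts to $\alpha_{i,m+1}\leq\alpha_{j,m}$ for all $i,j,m$; one then takes $\beta_m=\max_i\alpha_{i,m+1}$ as the roots of a global common interlacer. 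Beyond this misstatement, your proposal explicitly defers both substantive lemmas (the pairwise characterization of compatibility via common interlacers, and the propagation from a common interlacer to real-rootedness of $\sum_ic_if_i$), so as written it is a plan rather than a proof; note also that the induction on $k$ is superfluous, since the interval argument produces the common interlacer for the entire family in one step.
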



Using the previous theorem, Savage and Visontai proved the following result, which is essential
to their work in \cite{SavVis}.

\begin{thm}[{\cite[Theorem 2.3]{SavVis}}]\label{compat2}
Let $f_1,\dots ,f_k\in\mathbb{R}[t]$ be a sequence of polynomials with positive leading coefficients such
that for all $1\leq i\leq j\leq k$ we have
\begin{enumerate}
\item[(a)] $f_i(t)$ and $f_j(t)$ are compatible, and
\item[(b)] $tf_i(t)$ and $f_j(t)$ are compatible.
\end{enumerate}
Given a sequence of integers $0\leq \lambda_0\leq \lambda_1\leq \dots \leq \lambda_m\leq k$ define
\[g_p(t)=\sum_{r=0}^{\lambda_p-1}tf_r(t)+\sum_{r=\lambda_p}^{k}f_p(t)\]
for $1\leq p\leq m$. Then for all $1\leq i\leq j\leq m$ we have
\begin{enumerate}
\item[(a')] $g_i(t)$ and $g_j(t)$ are compatible, and
\item[(b')] $tg_i(t)$ and $g_j(t)$ are compatible.
\end{enumerate}
\end{thm}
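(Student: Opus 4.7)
The plan is to derive both (a') and (b') from Theorem~\ref{compat1}. Since each $g_p$ has positive leading coefficient, a family of such polynomials is compatible iff pairwise compatible, so it suffices to show that for each pair $1 \le i \le j \le m$, the polynomials $c_1 g_i + c_2 g_j$ (for (a')) and $c_1 t g_i + c_2 g_j$ (for (b')) have only real roots for every $c_1, c_2 \ge 0$.

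First I would expand these using the definition of $g_p$. Because $i \le j$ forces $\lambda_i \le \lambda_j$, collecting the coefficient of each $f_r$ gives a presentation
\[
c_1 g_i + c_2 g_j \;=\; \sum_{r=0}^{k} p_r(t)\, f_r(t),
\]
where $p_r(t)$ has nonnegative coefficients and $\deg p_r \le 1$: explicitly $p_r = (c_1+c_2) t$ for $r < \lambda_i$, $p_r = c_1 + c_2 t$ for $\lambda_i \le r < \lambda_j$, and $p_r = c_1+c_2$ for $r \ge \lambda_j$. The analogous expansion for $c_1 t g_i + c_2 g_j$ has the same shape with $\deg p_r \le 2$, the only new form being $p_r = t(c_1 t + c_2)$ in the range $r < \lambda_i$.

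The main step is then to verify pairwise compatibility of the family $\{p_r f_r\}_{r=0}^{k}$. Given $r \le s$, the combination $c_1' p_r f_r + c_2' p_s f_s$ expands into a nonnegative combination of polynomials drawn from the short list $\{f_r, tf_r, f_s, tf_s\}$ (with $t^2 f_r$ possibly appearing in the (b') setting). Every pair in this short list is compatible by hypothesis: $(f_r, f_s)$ by (a); $(tf_r, f_s)$ by (b), which is exactly where the orientation $r \le s$ enters; and pairs such as $(f_r, tf_r)$, $(tf_r, tf_s)$, or those involving $t^2 f_r$ reduce to the previous ones by factoring out a common $t$ or a nonnegative linear polynomial (which has a single real root). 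One application of Theorem~\ref{compat1} to this short list then shows the expansion is real-rooted, i.e., $\{p_r f_r\}_{r=0}^{k}$ is pairwise compatible. A second application of Theorem~\ref{compat1} yields that $\sum_r p_r f_r$—namely $c_1 g_i + c_2 g_j$ or $c_1 t g_i + c_2 g_j$—has only real roots.

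The main obstacle I anticipate is organizing the case analysis cleanly according to the positions of $r$ and $s$ relative to $\lambda_i$ and $\lambda_j$. It is crucial to verify pairwise compatibility at the level of $\{p_r f_r\}$—a condition symmetric in $r, s$, so we may always assume $r \le s$—rather than at the level of the atomic set $\{f_r, tf_r\}_{r=0}^{k}$, since the latter would require the pair $(tf_r, f_s)$ in the un-given direction $r > s$. Aligning each invocation of hypothesis (b) with its given orientation is what makes the argument go through.
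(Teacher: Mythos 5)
The paper does not prove this statement (it is quoted from \cite{SavVis}), so your argument has to stand on its own, and as written it has a genuine gap. Your strategy---expand $c_1g_i+c_2g_j$ as $\sum_r p_r(t)f_r(t)$ with $\deg p_r\le 1$, check pairwise compatibility of the summands $p_rf_r$, and apply Theorem~\ref{compat1} twice---is the right shape, and you correctly flag that hypothesis (b) is only available in the orientation $r\le s$. The problem is the step certifying that $c_1'p_rf_r+c_2'p_sf_s$ is real-rooted by applying Theorem~\ref{compat1} to the short list $\{f_r,tf_r,f_s,tf_s\}$. That application requires \emph{every} pair from the short list to be compatible, and the pair $(f_r,tf_s)$ with $r<s$---the one pair you omit from your enumeration---does not follow from (a) and (b). Indeed, $f_r=t+2$ and $f_s=(t+1)(t+3)$ satisfy (a) and (b) (since $f_r$ interlaces $f_s$, apply Lemma~\ref{compat3}), but $f_r$ and $tf_s$ are not compatible: by Lemma~\ref{compat3} that would force $f_s$ to interlace $f_r$, which is impossible since $\deg f_s>\deg f_r$. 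And the problematic configuration genuinely arises in your expansion: when $\lambda_i\le r<s<\lambda_j$ one has $p_r=p_s=c_1+c_2t$, so with $c_1,c_2>0$ all four members of the short list appear with positive coefficients.

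The repair is to notice that in exactly this troublesome case the two summands share the factor $p_r=p_s$, so
\[
c_1'p_rf_r+c_2'p_sf_s=(c_1+c_2t)\bigl(c_1'f_r+c_2'f_s\bigr),
\]
a product of a linear polynomial with nonnegative coefficients and a polynomial that is real-rooted by (a); no appeal to Theorem~\ref{compat1} for the four-element list is needed. In every other case ($r$ and $s$ in different ranges, or in a common range where $p_r=p_s$ can again be pulled out) the residual expansion involves at most three of $f_r,tf_r,f_s,tf_s$, namely a subset avoiding the simultaneous presence of $f_r$ and $tf_s$, and those pairs are covered by (a), (b) with the given orientation together with factoring out a common $t$. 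The same factor-out-the-common-$p$ device is needed in (b'), where the three ranges give $p_r=t(c_1t+c_2)$, $(c_1+c_2)t$, and $c_1t+c_2$. With that case split in place your argument goes through; without it, the invocation of Theorem~\ref{compat1} on the full short list is unjustified.
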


The following lemma is useful in connection with the previous theorem
(see \cite[Lemma 3.4]{Wag} and \cite[Lemma 2.5]{SavVis}).

\begin{lemma}\label{compat3}
Let $f,g\in\mathbb{R}[t]$ be polynomials with nonnegative coefficients. Then the following
two statements are equivalent:
\begin{enumerate}
\item[(i)] $f(t)$ and $g(t)$ are compatible, and $tf(t)$ and $g(t)$ are compatible.
\item[(ii)] $f$ interlaces $g$.
\end{enumerate}
\end{lemma}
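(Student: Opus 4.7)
The plan is to prove both directions using Obreschkoff's Theorem \ref{interlace} as the main tool. Throughout, observe that the nonnegative-coefficient hypothesis forces any real root of $f$ or $g$ to lie in $(-\infty, 0]$.

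For (ii) $\Rightarrow$ (i), assume $f$ interlaces $g$. Obreschkoff immediately gives that $c_1 f + c_2 g$ is real-rooted for all real $c_1, c_2$, hence in particular for $c_1, c_2 \geq 0$, so $f$ and $g$ are compatible. For compatibility of $tf$ and $g$, the multiset of roots of $tf$ is the multiset of roots of $f$ together with a new largest root at $0$ (using that all roots of $f$ are in $(-\infty, 0]$). I then verify by a direct check in each of the two degree cases $\deg g = \deg f$ and $\deg g = \deg f + 1$ that $g$ interlaces $tf$; the one new inequality to check in each case is $\beta_1 \leq 0$, where $\beta_1$ is the largest root of $g$, and this follows from $g$ having nonnegative coefficients. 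A second application of Obreschkoff then gives compatibility of $tf$ and $g$.

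For (i) $\Rightarrow$ (ii), by Obreschkoff it suffices to upgrade the real-rootedness of $c_1 f + c_2 g$ from the nonnegative case (supplied by compatibility) to all real $c_1, c_2$. Same-sign combinations are handled by the two given compatibilities (and their negations), so the only missing case is opposite signs, which reduces to showing that $f - \mu g$ is real-rooted for every $\mu > 0$. I propose a continuity argument on the family $F_s(t) := f(t) + s g(t)$: the roots of $F_s$ depend continuously on $s$ and are all real for $s \geq 0$ by compatibility. If some $F_{s^{*}}$ with $s^{*} < 0$ had a non-real root, there would be a critical value $s_0 \in [s^{*}, 0]$ at which $F_{s_0}$ first acquires a double real root $t_0 \leq 0$, giving the equations $f(t_0) + s_0 g(t_0) = 0$ and $f'(t_0) + s_0 g'(t_0) = 0$. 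I then invoke the second compatibility: the family $G_\sigma(t) := tf(t) + \sigma g(t)$ is real-rooted for every $\sigma \geq 0$, and evaluating at $\sigma_0 := s_0 t_0 \geq 0$ shows that $t_0$ is at least a simple root of $G_{\sigma_0}$. Comparing the branching of roots of $F_s$ and $G_\sigma$ near these critical values yields a contradiction except in the degenerate case when $t_0$ is a common root of $f$ and $g$; in that case one factors $(t - t_0)$ out of both $f$ and $g$ and inducts on $\deg f + \deg g$.

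The main obstacle is executing the branching analysis in the hard direction: the double-root condition on $F_{s_0}$ does not directly transfer into a double-root condition on $G_{\sigma_0}$, so the contradiction must be extracted from the directions in which the merging roots of $F_s$ and the corresponding simple roots of $G_\sigma$ bifurcate. A cleaner alternative, which I would try if the branching analysis proves cumbersome, is to apply Chudnovsky--Seymour (Theorem \ref{compat1}) to the triple $\{f, tf, g\}$ --- it is pairwise compatible because $f$ and $tf$ are trivially compatible when $f$ is real-rooted --- to upgrade to joint compatibility, and then use three-parameter combinations $(a + bt) f + c g$ with $a, b, c \geq 0$ to pin down the interlacing inequalities directly.
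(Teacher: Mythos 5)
First, a point of reference: the paper does not prove Lemma \ref{compat3} at all --- it cites \cite[Lemma 3.4]{Wag} and \cite[Lemma 2.5]{SavVis} --- so there is no internal proof to compare against. Your direction (ii) $\Rightarrow$ (i) is correct and essentially complete: since nonnegative coefficients force all real roots of $f$ and $g$ into $(-\infty,0]$, adjoining the root $0$ to $f$ does show that $g$ interlaces $tf$ (the only new inequality being $\beta_1\le 0$), and two applications of Theorem \ref{interlace} finish that half.

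The direction (i) $\Rightarrow$ (ii) is not proved; both of your proposed routes stop exactly where the work is. In the continuity argument, your own computation shows the transferred information is too weak: at the critical parameter one gets $G_{\sigma_0}(t_0)=0$ with $G_{\sigma_0}'(t_0)=f(t_0)$, so $t_0$ is generically a \emph{simple} root of the real-rooted polynomial $G_{\sigma_0}$, which is no contradiction; moreover $\sigma_0=s_0t_0$ moves with $t_0$, so you are comparing the two families along a curve with no identified mechanism for a contradiction (you acknowledge this as the ``main obstacle''). The Chudnovsky--Seymour alternative correctly yields joint compatibility of $\{f,tf,g\}$ (since $(a+bt)f$ is automatically real-rooted), but ``use three-parameter combinations to pin down the interlacing inequalities directly'' is the entire remaining content of the lemma and is not carried out. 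There is also a conceptual problem with the overall reduction: your plan is to show $c_1f+c_2g$ is real-rooted for all real $c_1,c_2$ and then invoke Theorem \ref{interlace}. That hypothesis is symmetric in $f$ and $g$, whereas the conclusion ``$f$ interlaces $g$'' is not when $\deg f=\deg g$ (the correctly stated Obreschkoff/Hermite--Kakeya theorem only gives interlacing in \emph{one of the two orders}; the paper's Theorem \ref{interlace} is stated loosely --- e.g.\ $f=t+2$, $g=t+1$ satisfy its right-hand condition but $g$ does not interlace $f$). The whole purpose of the second hypothesis, that $tf$ and $g$ are compatible, is to break this symmetry and select the order $\alpha_1\le\beta_1$; your plan only uses it to help establish real-rootedness of the combinations, so even a successful execution would leave the direction of interlacing undetermined in the equal-degree case. (As a reminder of how essential the second hypothesis is: $f=(t+1)(t+4)$ and $g=(t+2)(t+3)$ are compatible but do not interlace.) The standard proofs of this lemma proceed instead via the Hermite--Biehler theorem or a common-interleaver argument, which handle the directionality from the start.
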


In their proof that the type D Eulerian polynomials have only real roots, Savage and Visontai used
ascent sets of inversion sequences to construct a set of polynomials $T_{n,k}(t)$ for $0\leq k\leq 2n-1$,
and they used Theorem \ref{compat2} to show that these polynomials are compatible.
For $n\geq 2$ the involution on $B_n$ 
which changes the sign of the letter whose absolute value is one, is an involution which preserves type D 
descents (see \cite[Lemma 3.9]{SavVis}). One can combine this fact along with 
\cite[Theorem 3.12]{SavVis} to show that these polynomials may also be interpreted as follows
(see \cite{Branden} for a treatment of this topic that avoids inversion sequences).
\[T_{n,k}(t)=
\begin{cases}
\displaystyle 2\sum_{\substack{\pi\in D_n \\ \pi(n)=n-k}}t^{\desD(\pi)} & \text{ for }0\leq k\leq n-1\\
\displaystyle 2\sum_{\substack{\pi\in D_n \\ \pi(n)=n-1-k}}t^{\desD(\pi)} & \text{ for }n\leq k\leq 2n-1\\
\end{cases}.\]
In the proof of \cite[Theorem 3.15]{SavVis} the authors establish that for all $0\leq i<j\leq 2n-1$
the polynomials $T_{n,i}(t)$ and $T_{n,j}(t)$ are compatible, and the polynomials 
$tT_{n,i}(t)$ and $T_{n,j}(t)$ are also compatible, which implies the following.
\begin{thm}[\cite{SavVis}]\label{refine compat}

For $n\geq 4$ the set of polynomials 
\[T_{n,0}(t), T_{n,1}(t), \dots ,T_{n,2n-1}(t)\]
are compatible, and the set of polynomials
\[tT_{n,0}(t), tT_{n,1}(t), \dots ,tT_{n,n-1}(t),T_{n,n}(t), T_{n,n+1}(t), \dots ,T_{n,2n-1}(t)\] 
are also compatible.

\end{thm}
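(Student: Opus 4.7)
The plan is to deduce both claims directly from Theorem \ref{compat1}, leveraging the pairwise compatibility already isolated in the paragraph preceding the statement. From the proof of \cite[Theorem 3.15]{SavVis}, one has, for every $0 \leq i < j \leq 2n-1$, that (a) $T_{n,i}(t)$ and $T_{n,j}(t)$ are compatible and (b) $tT_{n,i}(t)$ and $T_{n,j}(t)$ are compatible. I would also observe that for $n \geq 4$ each $T_{n,k}(t)$ is a nonzero sum of monomials $t^{\desD(\pi)}$ (the relevant slice of $D_n$ is nonempty), so each $T_{n,k}(t)$ and each $tT_{n,k}(t)$ has positive leading coefficient.

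For the first assertion, the family $\{T_{n,k}(t)\}_{0\leq k\leq 2n-1}$ is pairwise compatible by (a) and has positive leading coefficients, so Theorem \ref{compat1} upgrades this to compatibility of the whole family.

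For the second assertion, set $\mathcal{F} = \{tT_{n,0}(t),\ldots,tT_{n,n-1}(t),T_{n,n}(t),\ldots,T_{n,2n-1}(t)\}$; by Theorem \ref{compat1} it suffices to verify pairwise compatibility of $\mathcal{F}$. I would handle this by a case check on a pair $f,g\in\mathcal{F}$ appearing in that order. If $f=tT_{n,i}$ and $g=tT_{n,j}$ with $i<j\leq n-1$, then any real combination $c_1 f+c_2 g = t(c_1 T_{n,i}+c_2 T_{n,j})$ has its nonzero roots among those of $c_1 T_{n,i}+c_2 T_{n,j}$, hence real by (a). If $f=T_{n,i}$ and $g=T_{n,j}$ with $n\leq i<j$, compatibility is immediate from (a). In the remaining case $f=tT_{n,i}$ with $i<n$ and $g=T_{n,j}$ with $j\geq n$, we have $i<j$, so compatibility follows from (b). This covers all pairs.

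The main obstacle is really absorbed into conditions (a) and (b): these are what require the substantial inductive argument based on Theorem \ref{compat2} and the ascent-set bookkeeping for inversion sequences in \cite{SavVis}. Once those are in hand, the present theorem is a routine consequence of Theorem \ref{compat1} together with the elementary observation that multiplying a polynomial by $t$ preserves the real-rootedness of any real linear combination.
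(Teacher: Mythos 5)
Your proposal is correct and follows essentially the same route as the paper, which simply notes that the pairwise compatibility facts (a) and (b) from the proof of \cite[Theorem 3.15]{SavVis} imply the theorem via Theorem \ref{compat1}; you have merely written out the routine case check (and the positive-leading-coefficient observation) that the paper leaves implicit. The only nitpick is that in the first case of your check you speak of ``any real combination'' where compatibility only requires nonnegative combinations, but this is harmless.
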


\begin{cor}\label{D conj}
For $n\geq 2$ the polynomial $Q_n(t)$ interlaces $t^nQ_n(t^{-1})$.
\end{cor}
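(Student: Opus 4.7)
The plan is to decompose both $Q_n(t)$ and $t^n Q_n(t^{-1})$ as nonnegative sums of the Savage-Visontai polynomials $T_{n,k}(t)$, and then invoke the compatibility statements of Theorem \ref{refine compat} together with Lemma \ref{compat3}. The first step is to read off, from Lemmas \ref{lemD1} and \ref{lemD2} together with the combinatorial description of $T_{n,k}$ given earlier in Section \ref{real}, the identities
\[2 Q_n(t) = \sum_{k=0}^{n-1} T_{n,k}(t), \qquad 2 t^n Q_n(t^{-1}) = \sum_{k=n}^{2n-1} T_{n,k}(t),\]
since partitioning $D_n^+$ (respectively $D_n^-$) according to the value of $\pi(n)$ runs through exactly the positive (respectively negative) values of $\pi(n)$, with the factor of $2$ built into the definition of $T_{n,k}$.

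With these identities in hand, I would apply Theorem \ref{refine compat}. The first compatibility statement says that every nonnegative linear combination of $T_{n,0}, \ldots, T_{n,2n-1}$ has only real roots, so in particular $c_1 Q_n(t) + c_2 t^n Q_n(t^{-1})$ is real-rooted for all $c_1, c_2 \geq 0$; that is, $Q_n(t)$ and $t^n Q_n(t^{-1})$ are compatible. The second compatibility statement, concerning the family $tT_{n,0}, \ldots, tT_{n,n-1}, T_{n,n}, \ldots, T_{n,2n-1}$, implies in the same way that $tQ_n(t)$ and $t^n Q_n(t^{-1})$ are compatible. Since both $Q_n(t)$ and $t^n Q_n(t^{-1})$ have nonnegative coefficients (they are generating polynomials of $\desD$ over subsets of $D_n$), Lemma \ref{compat3} then yields that $Q_n(t)$ interlaces $t^n Q_n(t^{-1})$.

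The main subtlety is that Theorem \ref{refine compat} is stated for $n \geq 4$, whereas the corollary claims $n \geq 2$. The two remaining small cases are handled directly: $Q_2(t) = t+1$ and $t^2 Q_2(t^{-1}) = t(t+1)$ have roots $\{-1\}$ and $\{-1,0\}$ respectively, which interlace; and the quadratic $Q_3(t) = 4t^2 + 7t + 1$ together with the cubic $t^3 Q_3(t^{-1}) = t(t^2+7t+4)$ likewise have interlacing real roots by a quadratic-formula computation. The real work is entirely in the index bookkeeping that produces the two displayed identities above; once they are established, the rest is an application of machinery already assembled in Section \ref{real}.
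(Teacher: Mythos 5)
Your proposal is correct and follows essentially the same route as the paper: decompose $2Q_n(t)$ and $2t^nQ_n(t^{-1})$ as the sums $\sum_{k=0}^{n-1}T_{n,k}(t)$ and $\sum_{k=n}^{2n-1}T_{n,k}(t)$, apply Theorem \ref{refine compat} to get the two compatibility statements, conclude via Lemma \ref{compat3}, and check $n=2,3$ directly (your computations $Q_2(t)=t+1$ and $Q_3(t)=4t^2+7t+1$ are right). The paper's proof is exactly this, written slightly more tersely.
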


\begin{proof}
For values of $n$ less than 4, this can be checked directly. For $n\geq 4$,
by Theorem \ref{refine compat} we have that for all $c_1,c_2\geq 0$ the polynomial
\[\frac{c_1}{2}\sum_{k=0}^{n-1}T_{n,k}(t)
+\frac{c_2}{2}\sum_{k=n}^{2n-1}T_{n,k}(t)=c_1Q_n(t)+c_2t^nQ(t^{-1})\]
has only real roots, and the polynomial
\[\frac{c_1}{2}\sum_{k=0}^{n-1}tT_{n,k}(t)
+\frac{c_2}{2}\sum_{k=n}^{2n-1}T_{n,k}(t)=c_1tQ_n(t)+c_2t^nQ(t^{-1})\]
has only real roots.
By Lemma \ref{compat3}, $Q_n(t)$ interlaces $t^nQ_n(t^{-1})$.
\end{proof}

In \cite[Corollory 3.13]{SavVis} the authors remark that \cite[Theorem 3.12]{SavVis}
can be used to show that $B_n(t)$ has only real roots. Indeed for $0\leq k\leq 2n-1$ 
one can use \cite[Theorem 3.12]{SavVis} to show that the following \textbf{s}-Eulerian
polynomials can be interpreted as follows
(again, see \cite{Branden} for a treatment that avoids inversion sequences).
\[E^{(2,4,\dots,2n)}_{n,k}(t)=\begin{cases}
\displaystyle \sum_{\substack{\pi\in B_n \\ \pi(n)=n-k}}t^{\desB(\pi)} & \text{ for }0\leq k\leq n-1\\
\displaystyle \sum_{\substack{\pi\in B_n \\ \pi(n)=n-1-k}}t^{\desB(\pi)} & \text{ for }n\leq k\leq 2n-1\\
\end{cases}.\]
For simplicity of notation, we define $B_{n,k}(t)=E^{(2,4,\dots,2n)}_{n,k}(t)$.




In their proof of \cite[Theorem 1.1]{SavVis}, the authors show that 
for all $0\leq i<j<s_n$
the polynomials $E_{n,i}^{(\textbf{s})}(t)$ and $E_{n,j}^{(\textbf{s})}(t)$ are compatible, 
and the polynomials 
$tE_{n,i}^{(\textbf{s})}(t)$ and $E_{n,j}^{(\textbf{s})}(t)$ are also compatible.
A special case is the following.


\begin{thm}[\cite{SavVis}]\label{refine compat B}
For $n\geq 1$ the set of polynomials
\[B_{n,0}(t), B_{n,1}(t), \dots ,B_{n,2n-1}(t)\]
are compatible, and the set of polynomials
\[tB_{n,0}(t), tB_{n,1}(t), \dots ,tB_{n,n-1}(t),B_{n,n}(t), B_{n,n+1}(t), \dots ,B_{n,2n-1}(t)\] 
are also compatible.
\end{thm}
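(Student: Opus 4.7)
The plan is to obtain Theorem~\ref{refine compat B} by specializing Savage and Visontai's machinery for $\mathbf{s}$-Eulerian polynomials to $\mathbf{s}=(2,4,\dots,2n)$, for which $s_n = 2n$. The identification $B_{n,k}(t)=E^{(2,4,\dots,2n)}_{n,k}(t)$ has already been recorded in the preceding discussion, so the combinatorial dictionary between signed permutations graded by $\pi(n)$ and $\mathbf{s}$-inversion sequences graded by their last coordinate is available (via \cite[Theorem 3.12]{SavVis} together with the parity involution on $B_n$ that flips the letter of absolute value $1$, or via the inversion-sequence-free treatment in \cite{Branden}).

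The first step is to quote, from the proof of \cite[Theorem 1.1]{SavVis}, the pairwise statement: for every $0\leq i<j\leq 2n-1$ the pair $B_{n,i}(t),B_{n,j}(t)$ is compatible, and the pair $tB_{n,i}(t),B_{n,j}(t)$ is compatible. Since each nonzero $B_{n,k}(t)$ has nonnegative integer coefficients and therefore positive leading coefficient, Theorem~\ref{compat1} immediately upgrades pairwise compatibility to compatibility of the full set $B_{n,0}(t),\dots,B_{n,2n-1}(t)$, proving the first assertion.

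For the second assertion, I would verify pairwise compatibility of the modified list in three cases. If $i<j\leq n-1$, then $tB_{n,i},tB_{n,j}$ is a compatible pair, because $c_1 tB_{n,i}+c_2 tB_{n,j}=t\bigl(c_1 B_{n,i}+c_2 B_{n,j}\bigr)$ is real-rooted whenever $c_1 B_{n,i}+c_2 B_{n,j}$ is. If $i\leq n-1<j$, compatibility of $tB_{n,i},B_{n,j}$ is precisely the second half of the Savage--Visontai pairwise statement. If $n\leq i<j$, then $B_{n,i},B_{n,j}$ is compatible by the first assertion. A second application of Theorem~\ref{compat1} then yields compatibility of the modified list. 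The main potential obstacle is not in the compatibility bookkeeping itself but in justifying the identification $B_{n,k}(t)=E^{(2,4,\dots,2n)}_{n,k}(t)$ carefully enough to transport Savage--Visontai's pairwise conclusion—proved at the level of inversion sequences by iterating Theorem~\ref{compat2}—onto the signed-permutation polynomials graded by $\pi(n)$; once that bridge is in place, the rest reduces to the elementary manipulations above.
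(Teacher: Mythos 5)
Your proposal is correct and follows essentially the same route as the paper, which presents the theorem as a direct consequence of the pairwise compatibility statements established in the proof of \cite[Theorem 1.1]{SavVis} (specialized to $\mathbf{s}=(2,4,\dots,2n)$), upgraded to set-wise compatibility via Theorem~\ref{compat1}. The paper leaves the case analysis for the second list implicit, but your three-case verification is exactly the bookkeeping that justifies the step.
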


In a manner analogous to Corollary \ref{D conj}, the previous theorem implies the following corollary.

\begin{cor}\label{B conj}
For $n\geq 1$ the polynomial $P_n(t)$ interlaces $t^nP_n(t^{-1})$.
\end{cor}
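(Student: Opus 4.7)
The plan is to mirror the argument used for Corollary \ref{D conj} almost verbatim, with $B$ replacing $D$ throughout and with the refined type B polynomials $B_{n,k}(t)$ in place of $T_{n,k}(t)$. The first step is to verify the two decomposition identities
\[\sum_{k=0}^{n-1}B_{n,k}(t)=P_n(t) \qquad \text{and} \qquad \sum_{k=n}^{2n-1}B_{n,k}(t)=t^nP_n(t^{-1}).\]
Both are immediate from the definitions: as $k$ ranges over $0,1,\dots,n-1$, the values $n-k$ exhaust the positive possibilities $1,2,\dots,n$ for $\pi(n)$, so the first sum is $\sum_{\pi\in B_n^+}t^{\desB(\pi)}$, which equals $P_n(t)$ by Lemma \ref{lemB1}. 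Similarly, for $k$ ranging over $n,n+1,\dots,2n-1$, the values $n-1-k$ exhaust the negative possibilities $-1,-2,\dots,-n$, and Lemma \ref{lemB2} gives the second identity.

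Next I would invoke Theorem \ref{refine compat B}. For any $c_1,c_2\geq 0$, the polynomial
\[c_1\sum_{k=0}^{n-1}B_{n,k}(t) + c_2\sum_{k=n}^{2n-1}B_{n,k}(t) = c_1P_n(t) + c_2 t^nP_n(t^{-1})\]
is a nonnegative linear combination of the first compatible family in Theorem \ref{refine compat B}, hence has only real roots. Likewise,
\[c_1\sum_{k=0}^{n-1}tB_{n,k}(t) + c_2\sum_{k=n}^{2n-1}B_{n,k}(t) = c_1 tP_n(t) + c_2 t^nP_n(t^{-1})\]
is a nonnegative linear combination of the second compatible family, and therefore also has only real roots.

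Since $P_n(t)$ and $t^nP_n(t^{-1})$ both have nonnegative coefficients, the two conditions in part (i) of Lemma \ref{compat3} are satisfied, and its equivalence with (ii) yields the conclusion that $P_n(t)$ interlaces $t^nP_n(t^{-1})$. There is no real obstacle here: the heavy machinery resides in Theorem \ref{refine compat B}, and the only thing left to confirm is the bookkeeping that identifies $P_n(t)$ and $t^nP_n(t^{-1})$ with the two natural sums of refined polynomials, which is just Lemmas \ref{lemB1} and \ref{lemB2}. If anything needs care, it is making sure the base case $n=1$ is covered, but Theorem \ref{refine compat B} already starts at $n\geq 1$, so no separate small-case check is required, unlike the type D corollary.
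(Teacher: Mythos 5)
Your proof is correct and is exactly the argument the paper intends: the paper gives no explicit proof, saying only that the corollary follows ``in a manner analogous to Corollary \ref{D conj},'' and your write-up carries out that analogy faithfully, correctly identifying $\sum_{k=0}^{n-1}B_{n,k}(t)$ with $P_n(t)$ and $\sum_{k=n}^{2n-1}B_{n,k}(t)$ with $t^nP_n(t^{-1})$ via Lemmas \ref{lemB1} and \ref{lemB2} before applying Theorem \ref{refine compat B} and Lemma \ref{compat3}. Your observation that no small-$n$ check is needed (since Theorem \ref{refine compat B} holds for all $n\geq 1$, unlike the type D case) is also accurate.
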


\begin{remark}
Recently Yang and Zhang \cite{yz2} gave a different proof of Corollary \ref{D conj} and Corollary
\ref{B conj}. Their methods include the Hermite-Biehler theorem, and a result of Borcea and Br\"and\'en on Hurwitz stability.

\end{remark}

\section{Acknowledgments}

The author would like to thank an anonymous referee for very useful suggestions that improved this paper.

\bibliographystyle{alpha}
\bibliography{my}

\end{document}